\theoremstyle{plain}
\newtheorem{theorem}{Theorem}
\newtheorem{lemma}{Lemma}
\newtheorem{proposition}{Proposition}
\newtheorem{corollary}{Corollary}
\theoremstyle{definition}
\newtheorem{definition}{Definition}
\newtheorem{remark}{Remark}
\begin{document}

\begin{center}
{\Huge Rings of simple range 2}
\end{center}
\vskip 0.1cm \centerline{{\large \fbox{ B.V. Zabavsky}, O.M. Romaniv, A.V. Sagan}}

\vskip 1cm

\footnotesize{\noindent\textbf{Corresponding author:} \textit{O.M. Romaniv} }

\vskip 0.3cm

\footnotesize{\noindent\textbf{Address:} \textit{Ivan Franko National University of Lviv,  Ukraine (O.M. Romaniv, A.V. Sagan)} }

\vskip 0.3cm

\footnotesize{\noindent\textbf{Abstract:} \textit{We introduce the concept of rings of simple range 2. Based on this concept, we build a theory diagonal reduction of matrices over Bezout domain. In particular we show that invariant Bezout domain is an elementary divisor ring if and only if it is a rings of simple range 2.} }

\vskip 0.3cm

\footnotesize{\noindent\textbf{Keywords and phrases:} \textit{Bezout ring, Hermite ring, stable range, diagonal reduction, simple ring, simple range}}

\vskip 0.3cm

\noindent{\textbf{Mathematics Subject Classification}}: 06F20, 13F99, 13E15

\vspace{1truecm}

\normalsize

\section{Introduction}

The study of linear groups over fields goes back to the mid-19th century. However, practically nothing was known about the ring except for semilocal and some types of arithmetic rings until the mid-1960s. A real commenting revolution was initiated by work H.~Bass~\cite{1-bass}. In particular, H.~Bass introduced a new concept of dimension of rings, a stable range and found that the main results on the structure of complete linear group $GL_n$ over the field with the necessary changes are transferred to rings, whose stable range is less than~$n$.  Note that a stable range is also closely related to the issues of stable structure of projective modules. In modern algebraic research there is a close connection between the concept of a stable range of ring and the open questions of diagonalization of matrices over these rings. The problem of diagonalization of matrices is a classic one. An overview can be found in~\cite{2-zabavsky}. It is known~\cite[Theorem~1.2.40]{2-zabavsky} that the stable range of an elementary divisor ring did not exceed~2. I.~Kaplansky noted that a commutative Bezout ring of stable range~1 is an elementary divisor ring~\cite{3-kaplansky}. M.~Henriksen showed that a unit regular ring is a ring over which the matrices are diagonalized~\cite{4-henri}. Even in the case of commutative rings, the bound of stable range is found to be cramped. In particular~\cite{12-zabrom,5-zabavsky,6-zabavsky} introduced a generalization of stable range of rings (rings of Gelfand range~1, rings of dyadic range~1) which allowed to reduce tha problem of complete description of the elementary divisor rings to the question of existence of a nontrivial Gelfand element~\cite{5-zabavsky}. In this paper we introduce the concept of a ring of simple range~2, which allows us to describe the new classes of noncommutative elementary divisor rings.

\section{Notations and preliminary results}

Let $R$ be an   associative ring with  non-zero unit.
We say that matrices $A$ and $B$ over a ring $R$ are equivalent if there  exist invertible matrices $P$ and $Q$ over  $R$ such that $B=PAQ$. If   a matrix $A$ over $R$ is equivalent to a diagonal matrix $D=(d_{ii})$ with the property that $d_{ii}$ is a total divisor of $d_{i+1,i+1}$ (i.e. $Rd_{i+1,i+1}R\subset d_{i,i}R\cap Rd_{i,i}$), then  we say that $A$  admits a canonical diagonal reduction. A ring $R$ over which every  matrix admits a  canonical diagonal reduction is called an {\it elementary divisor ring}. A ring $R$ is called {\it  right (left) Hermite} if each matrix $A\in R^{1\times 2}$ ($A\in R^{2\times 1}$) admits a diagonal reduction.  A ring which is  right and  left  Hermite is called a {\it Hermite ring}. Obviously a commutative left (right) Hermite ring is a Hermite ring. Moreover, each  elementary divisor ring is Hermite, and a right (left) Hermite ring is a right (left) \textit{B\'ezout ring}, i.e. a ring in which any   finitely generated right (left) ideal of $R$  is principal \cite{2-zabavsky}.

A row $(a_1,\ldots, a_n)\in R^{n}$  is called {\it unimodular} if $a_1R+a_2R+\cdots+a_nR=R$.
An unimodular $n$-row $(a_1,\ldots, a_n)\in R^{n}$   over a ring $R$  is called {\it reducible} if there exist a $(n-1)$-row  $(b_1,\ldots, b_{n-1})\in R^{n-1}$ such that
\[
(a_1+a_nb_1,a_2+a_nb_2,\ldots, a_{n-1}+a_nb_{n-1})\in R^{n-1}
\]
is unimodular. If $n\in\mathbb{N}$ is the smallest number  such that any  unimodular $(n+1)$-row is reducible, then  $R$ has {\it stable range} $n$, where $n\geq 2$. In particular, a ring $R$ has  {\it stable range} 1 if $aR + bR = R$ implies that $(a + bt)R = R$ for some $t\in R$. A ring $R$ is {\it a ring of stable range} 2 if $aR + bR+cR = R$ implies that $(a + cx)R+(b+cy)R= R$ for some $x,y\in R$. A right (left) Hermite ring is a ring of stable range~2 \cite[Theorem 1.2.40]{2-zabavsky}. In the case of commutative Bezout we have a results.

\begin{theorem}\cite[Theorem 2.1.2]{2-zabavsky}\label{theor-2.1}
 A commutative Bezout ring is a Hermite ring if and only if it is a ring of stable range~2.
\end{theorem}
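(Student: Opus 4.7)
The plan is to establish both directions separately. The forward direction, that a Hermite ring has stable range~2, is precisely the result \cite[Theorem 1.2.40]{2-zabavsky} already quoted in the excerpt just above the theorem, so nothing new is needed for that implication. The substance lies in the converse: a commutative Bezout ring of stable range~2 is Hermite.

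For the converse, I fix an arbitrary pair $(a,b) \in R^2$ and aim to produce $d \in R$ and $Q \in GL_2(R)$ with $(a,b)Q = (d,0)$. By the Bezout property, $aR + bR = dR$ for some $d$, so I can write $a = d a_0$, $b = d b_0$, and $d = ar + bs$ for suitable $a_0, b_0, r, s \in R$. If $R$ were a domain, the pair $(a_0, b_0)$ would already be unimodular and the argument would end there. In general the residue $e := 1 - a_0 r - b_0 s$ need not vanish; the best one gets is $de = 0$. The key observation is that $a_0 r + b_0 s + e = 1$, so the triple $(a_0, b_0, e)$ is unimodular in $R^3$.

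This is where the stable range~2 hypothesis enters: it produces $x, y \in R$ such that $u := a_0 + ex$ and $v := b_0 + ey$ satisfy $uR + vR = R$. Because $de = 0$, we still have $du = a$ and $dv = b$. Now $(u,v)$ is unimodular, so there exist $p, q \in R$ with $up + vq = 1$, and the matrix $Q = \begin{pmatrix} p & -v \\ q & u \end{pmatrix}$ has determinant $1$ and satisfies $(a,b)Q = (d,0)$ by a short computation (the off-diagonal entry $-av + bu$ vanishes by commutativity). This shows every $1 \times 2$ matrix admits a diagonal reduction; the $2 \times 1$ case then follows by transposition, and $R$ is Hermite.

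The main obstacle is the presence of zero divisors: over a commutative Bezout domain the conclusion would be immediate, since $(a_0, b_0)$ is automatically unimodular. With zero divisors, the naive factorisation loses unimodularity, and the role of stable range~2 is precisely to absorb the residual obstruction $e$ by shifting $(a_0, b_0)$ inside its coset modulo $eR$. Once this trick is isolated, the remainder of the argument is routine linear algebra.
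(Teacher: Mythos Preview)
The paper does not supply its own proof of this theorem: it is quoted verbatim as a preliminary result from \cite[Theorem~2.1.2]{2-zabavsky}, with no argument given in the present text. There is therefore nothing in the paper to compare your proposal against.

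That said, your argument is correct and is essentially the standard one. The forward direction is indeed exactly \cite[Theorem~1.2.40]{2-zabavsky}, as you note. For the converse, your trick of introducing the idempotent-like obstruction $e = 1 - a_0 r - b_0 s$ with $de = 0$, then using stable range~2 on the unimodular triple $(a_0, b_0, e)$ to shift to a genuinely unimodular pair $(u,v)$ with $du = a$, $dv = b$, is precisely how this result is usually proved (and is, incidentally, the same device the present paper reuses in the proof of its Theorem on stable range~1 implying simple range~2). The final construction of $Q$ with determinant $up + vq = 1$ is routine and correct. Nothing is missing.
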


This is an open question: is right (left) Bezout ring of stable range 2 a right (left) Hermite ring?

By \cite{7-amitsur} we have that Bezout domain is a Hermite ring, i.e. Bezout domain is a ring of stable range~2.

In the future we will consider  rings in which for any $a\in R$ we have $RaR=a_*R=Ra_*$ for some $a_*\in R$. Examples of such ring are simple rings \cite[Section 4.2]{2-zabavsky}, quasi-duo elementary divisor rings, semi-local semi-prime elementary divisor rings \cite[Theore~1]{9-dubrovin}, principal ideal domains. Condition $RaR=a_*R=Ra_*$ for any $a\in R$ is called \textit{Dubrovin condition} \cite{2-zabavsky}. Recall that nonzero element $a\in R$ is right (left) invariant if $aR$ ($Ra$) is two-sided ideal, if both conditions hold that is, if $aR=Ra$ then $a$ is called invariant element. If in a domain $R$ every factor of an invariant element is invariant we say that $R$ is a domain with Komarnytsky condition.  Obvious example of domain with Komarnytsky condition is a simple domain and domain in which any element is an invariant. Note that a principal ideal domain $\mathbb{H}[x]$, where $\mathbb{H}$ is quaternion divisor ring, we have invariant element $x^2+1$ but not any factor of it is not invariant. D-K elementary divisor rings are closely related with rings with Dubrovin and Komarnytsky conditions (hence forth, rings  with Dubrovin-Komarnytsky (D-K) condition).

\begin{definition}\label{def-2.1}
Ring $R$ is called an \textit{D-K elementary divisor ring} if an arbitrary matrix $A$ over $R$ is the equivalent matrix $\mathrm{diag} (\varepsilon_1,\varepsilon_2,\dots,\varepsilon_r,0,\dots,0)$ where $R\varepsilon_{i+1}R\subset \varepsilon_iR\cap R\varepsilon_i$ for all $i\in 1,r-1$ and $\varepsilon_1$, $\varepsilon_2$, \dots, $\varepsilon_{r-1}$ are invariant elements.
\end{definition}

A simple elementary divisor domain is an example of D-K elementary divisor ring.  By \cite[Theorem 4.1.1]{2-zabavsky}  a simple Bezout domain is an elementary divisor ring if and only if it is 2-simple domain. i.e. for any $a\in R\backslash\{0\}$ we have that $u_1av_1+u_2av_2=1$ for some $u_1,u_2,v_1,v_2\in R$. Consequently we have a next result.

\begin{corollary}
  Simple Bezout domain is D-K elementary divisor ring if and only if it is 2-simple domain.
\end{corollary}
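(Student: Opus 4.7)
The plan is to reduce the statement to the already-cited Theorem~4.1.1 of \cite{2-zabavsky}, which asserts that a simple Bezout domain is an elementary divisor ring if and only if it is 2-simple. Granted that, what remains is to verify that, inside the class of simple Bezout domains, the notion of a D-K elementary divisor ring (Definition~\ref{def-2.1}) coincides with the notion of an elementary divisor ring.

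One direction is essentially tautological: comparing Definition~\ref{def-2.1} with the definition of an elementary divisor ring from Section~2 shows that the D-K notion simply augments the latter with the extra requirement that $\varepsilon_1,\dots,\varepsilon_{r-1}$ be invariant, so every D-K elementary divisor ring is automatically an elementary divisor ring. I would record this with a single sentence.

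The substantive direction is the converse. Suppose $R$ is a simple Bezout domain and an elementary divisor ring, and let $A$ be any matrix over $R$. Choose a canonical diagonal reduction $A\sim \mathrm{diag}(d_1,\dots,d_r,0,\dots,0)$ with all $d_i\neq 0$ and $Rd_{i+1}R\subset d_iR\cap Rd_i$ for $1\le i\le r-1$. The key observation is that, for each such $i$, the two-sided ideal $Rd_{i+1}R$ is nonzero (it contains $d_{i+1}$), hence by simplicity $Rd_{i+1}R=R$. Thus $1\in d_iR$ and $1\in Rd_i$, so $d_i$ is both right- and left-invertible, i.e.\ a unit. Units are trivially invariant, so the very same diagonal reduction also witnesses that $R$ is a D-K elementary divisor ring.

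Combining the two directions with Theorem~4.1.1 of \cite{2-zabavsky} closes the loop: for a simple Bezout domain, being a D-K elementary divisor ring is equivalent to being an elementary divisor ring, and hence equivalent to being 2-simple. I do not expect any real obstacle here; the only point that needs attention is the translation of the total-divisibility condition into invertibility via simplicity, and this is immediate once one notes that in a simple ring the only nonzero invariant elements are the units.
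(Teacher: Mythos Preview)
Your argument is correct and follows the same route the paper takes: the paper simply records that a simple elementary divisor domain is an example of a D-K elementary divisor ring, then invokes \cite[Theorem~4.1.1]{2-zabavsky}, and states the corollary as an immediate consequence. You have merely made explicit the reason why a simple elementary divisor domain is D-K (the total-divisibility chain together with simplicity forces $d_1,\dots,d_{r-1}$ to be units, hence invariant), which the paper leaves unsaid.
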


Let $R$ be a simple ring. Clearly $RaR=\{\sum_{i=1}^n u_iav_i\mid u_i,v_i\in R\}=R$ for each $a\in R\backslash\{0\}$. Then we have $n\in \mathbb{N}$ such that
\begin{equation}\label{*}
  u_1av_1+u_2av_2+\dots+u_nav_n=1.
\end{equation}
If for all $a\in R\backslash\{0\}$ there exists a minimal $n$ which satisfies~\eqref{*}, then $R$ is called \textit{$n$-simple}.

Description D-K elementary divisor ring is quite an interesting and extremes challenging task for future usearch. Just note the next open task: is there $n$-simple Bezout domain where $n\geq 3$?

\section{Rings of simple range 2}

\begin{definition}
We say a ring $R$ is \textit{a ring of simple range 2} if $RaR+RbR+RcR=R$ where $c\ne0$ implied $R(pa+qb)R+RpcR=R$ for some $p,q\in R$.
\end{definition}

\begin{proposition}\label{prop-3.1}
  Let $R$ be Bezout ring. The following statements are equivalent:

  \begin{description}
    \item[1)] $R$ is a ring of simple range 2;
    \item[2)] $RaR+RbR+RcR$, $c\ne0$, implied $(pa+qb)R+pcR=dR$ where $RdR=R$ for some $p,q\in R$.
  \end{description}
\end{proposition}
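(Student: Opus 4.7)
The plan is to pass between a two-sided and a one-sided statement by using the Bezout hypothesis to replace the right ideal $(pa+qb)R+pcR$ by a single generator $d$, and then to identify the two-sided ideal $R(pa+qb)R+RpcR$ with $RdR$. Once that identification is in place, (1) and (2) differ only by the trivial rewriting $R=RdR$, so the equivalence becomes essentially formal.

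The key lemma I would record first is: if $(pa+qb)R+pcR = dR$ in a (right) Bezout ring $R$, then
\[
R(pa+qb)R + RpcR \;=\; RdR.
\]
The inclusion $\supseteq$ follows from writing $d = (pa+qb)s_1 + pcs_2$ for some $s_1,s_2\in R$, which places $d$ inside $R(pa+qb)R+RpcR$ and hence puts all of $RdR$ there as well. The inclusion $\subseteq$ follows from $pa+qb = du_1$ and $pc = du_2$ for some $u_1,u_2\in R$, so that $R(pa+qb)R \subseteq RdR$ and $RpcR \subseteq RdR$.

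For (1)$\Rightarrow$(2), I would start with $RaR+RbR+RcR=R$ and $c\neq 0$, apply the simple range 2 hypothesis to obtain $p,q$ with $R(pa+qb)R+RpcR=R$, use the Bezout property to write $(pa+qb)R+pcR=dR$, and then invoke the lemma above to conclude $RdR=R$. For (2)$\Rightarrow$(1) the same lemma yields $R(pa+qb)R+RpcR = RdR = R$ directly from the data supplied by (2).

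I do not anticipate a real obstacle; the only point that requires care is the direction of the Bezout assumption (one needs finitely generated right ideals to be principal to obtain $d$), and the verification that both inclusions in the lemma go through with noncommutative coefficients. Everything else is immediate from the definitions.
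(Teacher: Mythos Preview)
Your proposal is correct and follows essentially the same approach as the paper: both directions hinge on the identity $R(pa+qb)R+RpcR=RdR$ once the Bezout hypothesis produces a generator $d$ of $(pa+qb)R+pcR$, and both inclusions are checked exactly as you describe. The only cosmetic difference is that you isolate this identity as a preliminary lemma, whereas the paper argues each inclusion inline within the two implications.
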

\begin{proof}
Let $RaR+RbR+RcR=R$, $c\ne0$, implied $(pa+qb)R+pcR=dR$ where $RdR=R$ for some $p,q\in R$. Since $pcR\subseteq RpcR$ and $(pa+qb)R\subseteq R(pa+qb)R$ we have $(pa+qb)R+pcR\subseteq R(pa+qb)R+RpcR$. So on $RdR=R$ and $d\in R(pa+qb)R+RpcR$ we have $R(pa+qb)R+RpcR=R$.

Let condition $RaR+RbR+RcR=R$, $c\ne0$, implied $R(pa+qb)R+RpcR=R$ for some $p,q\in R$. Since $R$ is a Bezout ring then $(pa+qb)R+pcR=dR$ for some $d\in R$. Since $(pa+qb)R\subset dR$ and $pcR\subseteq RdR$ and $R(pa+qb)R\subseteq RdR$. So on $R(pa+qb)R+RpcR=R$ we have that $RdR=R$.
\end{proof}

Obviously a simple ring is  ring of simple range 2.

\medskip

A ring $R$ is called \textit{purely infinite} if $R\cong R^2$ as $R$-modules.
For example for any ring $S=End_R(R^\infty)$ is purely infinite, where $R^\infty$ denotes an infinite direct product of $R$. This ring is simple ring, i.e. a ring of simple range 2 which have infinite stable range. Nevertheless we have a next result.

\begin{theorem}
  A Bezout ring of stable range 1 is a ring of simple range 2.
\end{theorem}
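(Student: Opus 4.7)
The plan is to apply Proposition~\ref{prop-3.1}: it suffices to exhibit $p,q\in R$ such that $(pa+qb)R + pcR = dR$ for some $d$ with $RdR=R$. I would take $p=1$ and hunt for an appropriate $q$.

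First, I would reduce the pair $(a,b)$ to a single element using the (right) Bezout property. Write $aR+bR = eR$, so that $a=ea'$, $b=eb'$ for some $a',b'\in R$ and $e=au+bv$ for some $u,v\in R$. Because $a,b\in eR$ and $e\in aR+bR$, the two-sided ideal $ReR$ coincides with $RaR+RbR$, so the hypothesis $RaR+RbR+RcR=R$ becomes $ReR+RcR=R$. Invoking Bezout once more, $eR+cR=dR$ for some $d$; from $e,c\in dR$ one infers $R = ReR+RcR\subseteq RdR$, whence $RdR=R$.

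Second, I would produce $q$ with $(a+qb)R = eR$. The identity $e=e(a'u+b'v)$ together with regularity of $e$ yields $a'u+b'v=1$, so $a'R+b'R=R$. Stable range~$1$ then furnishes $q\in R$ with $(a'+b'q)R=R$, say $(a'+b'q)w=1$. Hence
\[
(a+qb)\,w \;=\; e(a'+b'q)\,w \;=\; e,
\]
which shows $e\in(a+qb)R$; combined with the automatic inclusion $(a+qb)R\subseteq aR+bR = eR$, this gives $(a+qb)R=eR$. Therefore $(a+qb)R+cR = eR+cR = dR$ with $RdR=R$, which is exactly what Proposition~\ref{prop-3.1} demands.

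The main obstacle is the cancellation step that passes from $e(a'u+b'v)=e$ to $a'u+b'v=1$. This is automatic when $e$ is a regular element --- in particular the trivial case $a=b=0$ is handled directly, since then the hypothesis reduces to $RcR=R$ and any choice of $p,q$ works, and the presence of $c\ne 0$ in the definition is what keeps this degeneracy harmless. For a Bezout ring in full generality one must either verify that the right annihilator of $e$ does not obstruct the application of stable range~$1$, or replace the single reduction above by a more careful two-sided argument in the spirit of Proposition~\ref{prop-3.1}. I expect this cancellation point, rather than the overall outline, to be where the real work of the proof is concentrated.
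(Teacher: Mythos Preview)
Your strategy---take $p=1$, reduce $aR+bR$ to a single generator $e$, and then use stable range~$1$ to collapse $a,b$ to a single element generating $eR$---is exactly the paper's. You have also correctly isolated the one genuine difficulty: the passage from $e(a'u+b'v)=e$ to $a'u+b'v=1$ is illegitimate when $e$ is not regular, and for a Bezout ring that is not a domain this must be circumvented rather than assumed away.

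The paper's device for this gap is worth recording. Put $r=1-a'u-b'v$; then $er=0$ and $a'u+b'v+r=1$, so $a'R+b'R+rR=R$. Since stable range~$1$ implies stable range~$2$, there exist $\lambda,\mu$ with $(a'+r\lambda)R+(b'+r\mu)R=R$. Setting $a_0=a'+r\lambda$ and $b_0=b'+r\mu$, the crucial observation is that $ea_0=ea'=a$ and $eb_0=eb'=b$ because $er=0$: one has \emph{replaced} $a',b'$ by a genuinely comaximal pair while preserving the factorizations $a=ea_0$, $b=eb_0$. From this point your own stable-range-$1$ argument runs verbatim with $a_0,b_0$ in place of $a',b'$, and no regularity hypothesis on $e$ is needed. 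So the ``more careful argument'' you anticipated is precisely this annihilator-correction trick.

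One small noncommutative slip in your write-up: from $a=ea'$, $b=eb'$ one obtains $e(a'+b'q)=a+bq$, not $a+qb$, so the displayed identity $(a+qb)w=e(a'+b'q)w$ does not hold as stated. The paper's own proof is equally casual on this left--right point (it finishes with $R(b\lambda+a)R+RcR=R$ and declares the definition verified), so this is not a divergence between your approach and theirs, but it is something both arguments would need to tidy in a fully rigorous noncommutative version.
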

\begin{proof}
  Let $RaR+RbR+RcR=R$, $c\ne0$. Since $R$ is a Bezout ring we have $aR+bR=dR$, $a=da_1$, $b=db_1$ and $au+bv=d$ for some $d,a_1, b_1, u,v\in R$. Then $d(1-a_1u-b_1v)=0$. Let $c=1-a_1u-b_1v$ we have $a_1R+b_1R+cR=R$ and $dc=0$. Since any ring of stable range 1 is a ring of stable range 2 we have that $(a_1+c\lambda)R+(b_1+c\mu)R=R$ for some $\lambda, \mu\in R$. Let $a_0=a_1+c\lambda$, $b_0=b_1+c\mu$, then $a_oR+b_0R=R$ and $a=da_0$, $b=db_0$. Since $R$ is a ring of stable range 1 we have $(b_0\lambda +a_0)R=R$. i.e. $b_0\lambda +a_0=u$ is invertible element $R$. Since $RaR+RbR+RcR=R$ and $b\lambda +a=du$ we have $(b\lambda+a)R=duR=dR$ and $(b\lambda+a)R\subseteq R(b\lambda+a)R$, $dR\subseteq RdR$ and $RaR+RbR=RdR$. Then $R(b\lambda+a)R+RcR=RaR+RbR+RcR=R$. i.e. $R$ is a ring of simple range 2.
\end{proof}

\begin{theorem}\label{theor-3.3}
  D-K elementary divisor domain is a ring of simple range 2.
\end{theorem}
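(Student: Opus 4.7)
The plan is to construct $p,q\in R$ by applying the D-K canonical diagonal reduction to the $2\times 2$ matrix
$$M=\begin{pmatrix} a & c \\ b & 0 \end{pmatrix}$$
and reading off the top row of the left transforming factor. The shape of $M$ is chosen precisely so that after multiplying on the left by a matrix $P=(p_{ij})$, the top row of $PM$ is exactly $(p_{11}a+p_{12}b,\ p_{11}c)$---that is, the coefficient of $a$ and of $c$ is the same entry $p_{11}$, while $b$ gets a possibly different coefficient $p_{12}$, which matches the desired shape $(pa+qb,\ pc)$ appearing in the definition of simple range $2$.

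Assuming $RaR+RbR+RcR=R$ with $c\neq 0$, I will first invoke the D-K hypothesis to obtain invertible $P,Q$ over $R$ with $PMQ=\mathrm{diag}(\varepsilon_1,\varepsilon_2)$ (possibly with $\varepsilon_2=0$), where the total-divisibility inclusion $R\varepsilon_2R\subseteq\varepsilon_1R$ from Definition~\ref{def-2.1} is available. Two facts then need to be extracted. First, the two-sided ideal generated by the entries of a matrix is an equivalence invariant (immediate from $M=P^{-1}(PMQ)Q^{-1}$ and its converse), so $RaR+RbR+RcR$ equals $R\varepsilon_1R+R\varepsilon_2R=R\varepsilon_1R$; hence $R\varepsilon_1R=R$. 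Second, computing the first row of $PM$ in two ways---directly as $(p_{11}a+p_{12}b,\ p_{11}c)$, and as $(\varepsilon_1,0)Q^{-1}=(\varepsilon_1\alpha,\varepsilon_1\beta)$, where $(\alpha,\beta)$ denotes the first row of $Q^{-1}$---yields $p_{11}a+p_{12}b=\varepsilon_1\alpha$ and $p_{11}c=\varepsilon_1\beta$.

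Setting $p:=p_{11}$ and $q:=p_{12}$, the right-ideal sum then satisfies
$$(pa+qb)R+pcR=\varepsilon_1\alpha R+\varepsilon_1\beta R=\varepsilon_1(\alpha R+\beta R)=\varepsilon_1R,$$
where the last equality uses the standard fact that the first row of the invertible matrix $Q^{-1}$ is right-unimodular, read off from $Q^{-1}Q=I$. Consequently $\varepsilon_1\in R(pa+qb)R+RpcR$, and combined with the first fact this gives
$$R=R\varepsilon_1R\subseteq R(pa+qb)R+RpcR\subseteq R,$$
which is exactly the defining condition of simple range $2$.

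The only delicate points I anticipate are bookkeeping: one should treat the possibility $\varepsilon_2=0$ uniformly (which poses no problem since then $R\varepsilon_2R=0\subseteq R\varepsilon_1R$ trivially), and one should state the equivalence-invariance of the two-sided ideal of entries carefully, since in the noncommutative setting it requires invoking both the left and the right changes of variables. It is worth noting that full invariance of $\varepsilon_1$ itself is not needed in this argument---only the total-divisibility inclusion $R\varepsilon_2R\subseteq\varepsilon_1R$ furnished by the canonical diagonal form is used, and this is what allows the same proof to cover the degenerate case where $M$ has ``rank'' one.
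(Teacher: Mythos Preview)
Your proof is correct and follows essentially the same route as the paper: diagonalize $M=\bigl(\begin{smallmatrix} a & c \\ b & 0\end{smallmatrix}\bigr)$ via the D-K hypothesis and read off $p,q$ from the first row of the left factor $P$. The only difference is in the endgame: the paper splits into the cases $b=0$ and $b\neq 0$, and in the latter uses that $\varepsilon_1$ is invariant (guaranteed by Definition~\ref{def-2.1} when $\varepsilon_2\neq 0$) to deduce that $\varepsilon_1$ is a unit and hence $(pa+qb)R+pcR=R$ outright, whereas you avoid both the case split and the invariance hypothesis by arguing only that $\varepsilon_1\in(pa+qb)R+pcR$ and $R\varepsilon_1R=R$---a minor but genuine streamlining, exactly as you note at the end.
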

\begin{proof}
  Let $R$ be a D-K elementary divisor domain and $RaR+RbR+RcR=R$, $c\ne 0$. Consider a matrix $A=\Bigl(\begin{smallmatrix}
                             a & c \\[3pt]
                             b & 0
                           \end{smallmatrix}\Bigr)$.
  Let $b\ne 0$, then
  \begin{equation}\label{**}
    PAQ=\begin{pmatrix}
                             z & 0\\
                             0 & \alpha
                           \end{pmatrix}
  \end{equation}
  where $R\alpha R\subset zR\cap Rz$, for some invertible matrices $P$ and $Q$. Since $b\ne0$ we have that $\alpha\ne 0$. Since $RaR+RbR+RcR=R$ and $R\alpha R\subset aR\cap Rz$ according to \eqref{**} we have $R=RaR+RbR+RcR=zR=Rz$, i.e. $z$ is an invertible element $R$. We denote
  $P=\Bigl(\begin{smallmatrix}p & q \\[3pt] * & *\end{smallmatrix}\Bigr)$. According to \eqref{**} we have $(pa+qb)R+pcR=R$. By Proposition~\ref{prop-3.1} we have $R(pa+qb)R+RpcR=R$. If $b=0$ we have $R(a+b)R+RcR=R$. We proved that $R$ is a ring of simple range 2.
\end{proof}

We will note that if $R$ is a Bezout domain and $RaR=R$ and if $aR\subset bR$ or $Ra\subset Rc$ we have that $RbR=R$ and $RcR=R$. We have a next result.

\begin{proposition}\label{prop-3.4}
  Let $R$ be a Bezout domain with D-K condition. If $RaR=R$ and $RbR=R$ then $RabR=R$.
\end{proposition}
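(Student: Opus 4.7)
The plan is to apply the Dubrovin condition to pass to invariant generators of the relevant two-sided ideals, and then to use the Komarnytsky condition together with the Bezout structure to force the generator of $RabR$ to be a unit.

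First I would translate the hypotheses via Dubrovin: write $RaR = a_*R = Ra_*$ and $RbR = b_*R = Rb_*$ with $a_*$, $b_*$ invariant. From $RaR = R$ we get $a_*R = R$, and since $R$ is a domain, the nonzero element $a_*$ (being right-invertible) is automatically a two-sided unit; likewise $b_*$ is a unit. Applying Dubrovin also to $ab$, fix an invariant $(ab)_*$ with $RabR = (ab)_*R = R(ab)_*$.

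Second, using invariance to simplify the ideal product, $RaR \cdot RbR = a_*R \cdot b_*R = a_*b_*R$ (sliding $R$ past $b_*$ via $Rb_* = b_*R$), and this equals $R$ since $a_*b_*$ is a product of two units. The trivial inclusion $RabR \subseteq RaR \cdot RbR = R$ refines to $(ab)_*R \subseteq a_*b_*R$, so we may write $(ab)_* = a_*b_* t$ for some $t \in R$. The element $t = (a_*b_*)^{-1}(ab)_*$, a product of invariants, is itself invariant.

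The crux is then to show $t$ is a unit, for this is equivalent to $(ab)_*$ being a unit and hence to $RabR = R$. Here I would invoke Komarnytsky: every factor of the invariant $(ab)_*$ is invariant, and $t$ plays the role of the right cofactor of $a_*b_*$ inside $(ab)_*$. Combined with the explicit Bezout identities $\sum u_i a v_i = 1$ and $\sum x_j b y_j = 1$, and the ability (via invariance of $a_*$ and $b_*$) to aggregate intermediate scalars past $a$ or past $b$, one constructs a representation of a unit inside $RabR$, which forces $tR = R$.

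The hard part is exactly this last step. One can easily derive $1 \in RaRbR$: since $1 \in RaR$, we have $b = (\sum u_i a v_i) b \in RaRb$, so $RbR \subseteq RaRbR$ and hence $RaRbR = R$. But compressing this into a relation $1 = \sum p_k\, ab\, q_k$ requires commuting the interior factors $v_i x_j$ past either $a$ or $b$; since neither $a$ nor $b$ is assumed individually invariant, this step is nontrivial, and it is precisely where the full strength of the Dubrovin-Komarnytsky condition (invariance of aggregated quantities plus control of factors of invariants) must be used.
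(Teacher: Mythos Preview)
Your proposal correctly sets up the problem but stops short at the decisive step. After reducing to the question of whether $(ab)_*$ is a unit, you observe that $RaRbR = R$ and then acknowledge that the hard part is compressing this to $RabR = R$, gesturing at the Dubrovin--Komarnytsky condition without providing a concrete mechanism. That compression is the entire content of the proposition, so as written the argument is incomplete: you have identified the difficulty but not resolved it.

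The paper supplies the missing idea, and it is quite different from trying to rearrange a double sum $\sum u_iav_ix_jby_j$. Set $\alpha = (ab)_*$, so $RabR = \alpha R = R\alpha$ with $\alpha$ invariant, and suppose $\alpha$ is not a unit. Form the right Bezout gcd $bR + \alpha R = \beta R$. Since $\beta$ divides the invariant element $\alpha$, the Komarnytsky condition makes $\beta$ invariant; then $R = RbR \subseteq R\beta R = \beta R$ forces $\beta$ to be a unit. Hence there is a relation $bu + \alpha v = 1$. Left-multiplying by $a$ gives $abu + a\alpha v = a$. Now $ab \in RabR = R\alpha$, and invariance of $\alpha$ lets one slide the right factors to the left, yielding $a \in R\alpha$. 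Consequently $R = RaR \subseteq R\alpha R = R\alpha$, so $\alpha$ is a unit after all. The key move you did not find is to apply Komarnytsky to the specific gcd of $b$ with $(ab)_*$: this converts the hypothesis $RbR = R$ into an honest Bezout identity between $b$ and $(ab)_*$, which is exactly what is needed to pull $a$ inside $R(ab)_*$.
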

\begin{proof}
  In $R$ there exists $a,b\in R$ such that $RaR=R$ and $RbR=R$ such that $RabR=\alpha R=R\alpha$ where $\alpha$ is not invertible element of $R$. Since $R$ is a Bezout domain we have $bR+\alpha R=\beta R$ for some $\beta\in R$. Since $\alpha R\subset \beta R$ we have that $\beta$ is an invariant element. Since $bR\subset\beta R$ and $RbR=R$ we have that $\beta$ is invertible element $R$.

  Then we have that $bu+\alpha v=1$ for some $u,v\in R$. Since $abu+a\alpha=a$ and $RabR=\alpha R=R\alpha$ we have that $ab=s\alpha$ for some $s\in R$. Then $su'\alpha+a\alpha=a$ where $\alpha u=u'\alpha$. So on we have $Ra\subset R\alpha$ and $R=RaR\subset R\alpha R$ we have that $\alpha$ is an invertible element. The contradiction obtained proves Proposition~\ref{prop-3.4}.
\end{proof}

Let $R$ be a Bezout domain of simple range 2, i.e. $RaR+RbR+RcR=R$, $c\ne 0$, imply $R(pa+qb)R+RpcR=R$.

\begin{remark}\label{remark-1}
  According to Proposition~\ref{prop-3.1}, Proposition~\ref{prop-3.4} and the fact then $R$ is Bezout domain we can assume that it is equality $R(pa+qb)R+RpcR=R$ elements $p$, $q$ such that $pR+qR=R$. If $pR+qR=tR$, then $p=tp_0$, $q=tq_0$, $p_0R+q_0R=R$ for some $t, p_0, q_0\in R$. Since $R(pa+qb)R+RpcR=R$ we have $RtR=R$ and $R(p_0a+q_0b)R+Rp_0cR=R$.
 \end{remark}

We  will need the next ancient result in the future.

\begin{proposition}\cite[Lemma 1]{10-williams}\label{prop-3.5}
  Let $R$ be a Bezout domain and $pR+qR=R$, $Ru+Rv=R$ for some $p,q,u,v\in R$. Then there exist invertible matrices $P$ and $Q$ of the form
  $$
  P=\begin{pmatrix}
      p & q \\
      * & *
    \end{pmatrix}, \quad Q=\begin{pmatrix}
      u & * \\
      v & *
    \end{pmatrix}.
  $$
\end{proposition}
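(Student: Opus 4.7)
\medskip

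\noindent\textbf{Proof plan.} The plan is to leverage Amitsur's theorem (recalled in Section~2) that a Bezout domain is a two-sided Hermite ring, which lets every unimodular row, respectively column, be transformed into $(1,0)$, respectively $(1,0)^{T}$, by an invertible matrix; the desired $P$ and $Q$ then appear as the inverses of these transforming matrices.

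First I would construct $P$. Treating $(p,q)$ as an element of $R^{1\times 2}$, right-Hermiteness yields an invertible $Q_{1}\in R^{2\times 2}$ and an element $d\in R$ with $(p,q)Q_{1}=(d,0)$. Reading off coordinates shows $d\in pR+qR$, while $(p,q)=(d,0)Q_{1}^{-1}$ gives $p,q\in dR$; hence $dR=pR+qR=R$. Thus $d$ is right invertible, and in a domain right invertibility forces two-sided invertibility: if $dd'=1$ then $(d'd-1)d'=d'dd'-d'=0$, and since $d'\neq 0$ we get $d'd=1$. Replacing $Q_{1}$ by $Q_{1}':=Q_{1}\,\mathrm{diag}(d^{-1},1)$ (still invertible), I have $(p,q)Q_{1}'=(1,0)$, so $(p,q)$ is the first row of the invertible matrix $P:=(Q_{1}')^{-1}$.

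The construction of $Q$ is entirely dual. Applying the left-Hermite property to the column $(u,v)^{T}$ gives an invertible $P_{2}$ with $P_{2}(u,v)^{T}=(d',0)^{T}$; the element $d'$ generates the left ideal $Ru+Rv=R$, so the same one-sided-to-two-sided argument in a domain shows $d'$ is a unit. Setting $P_{2}':=\mathrm{diag}((d')^{-1},1)\,P_{2}$ I obtain $P_{2}'(u,v)^{T}=(1,0)^{T}$, whence $(u,v)^{T}$ is the first column of $Q:=(P_{2}')^{-1}$.

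The main obstacle I anticipate is the noncommutativity: one cannot simply exhibit $P=\bigl(\begin{smallmatrix}p & q \\ -b & a\end{smallmatrix}\bigr)$ with $pa+qb=1$ and conclude via a determinantal inverse, because $2\times 2$ determinants are not multiplicative over a noncommutative ring. Amitsur's theorem bypasses this issue cleanly, and the only remaining subtle point---upgrading the one-sided inverses of $d$ and $d'$ to genuine two-sided inverses---is free in a domain.
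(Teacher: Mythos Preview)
The paper does not supply its own proof of this proposition; it is quoted verbatim as an ``ancient result'' from Williams~\cite{10-williams}, Lemma~1, so there is no argument in the text to compare yours against.

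Your proof is correct. Invoking Amitsur's theorem from Section~2 that a Bezout domain is two-sided Hermite to transform the unimodular row $(p,q)$ (respectively the unimodular column $(u,v)^{T}$) to $(1,0)$ (respectively $(1,0)^{T}$), and then taking inverses of the transforming matrices, produces exactly the desired $P$ and $Q$. The one genuinely noncommutative subtlety---that the right generator $d$ of $pR+qR=R$ and the left generator $d'$ of $Ru+Rv=R$ are only a~priori one-sided units---is dispatched correctly by your cancellation argument in a domain. Your observation that the naive determinantal construction $\bigl(\begin{smallmatrix}p & q\\ -b & a\end{smallmatrix}\bigr)$ fails over a noncommutative ring is also on point, and is precisely why one needs either the Hermite machinery or a more careful bare-hands construction of the second row and column.
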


As a result we get the main result of this paper in terms of constructing the theorem of matrix diagonalization over noncommutative rings.

\begin{lemma}\label{lemma-3.6}
  Let $R$ be a Bezout domain of simple range 2. Then any matrix $A=\Bigl(\begin{smallmatrix}
      a & c \\[3pt] 
      b & 0
    \end{smallmatrix}\Bigr)$ where $c\ne0$ and $RaR+RbR+RcR=R$ is equivalent to matrix $X=\Bigl(\begin{smallmatrix}
      x & z \\[3pt] 
      y & 0
    \end{smallmatrix}\Bigr)$ such that $RxR=R$.
\end{lemma}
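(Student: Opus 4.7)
The plan is to combine the simple range 2 hypothesis with Propositions~\ref{prop-3.5} and~\ref{prop-3.1} to produce a suitable invertible left multiplier $P$, and to use the Hermite property of the Bezout domain $R$ (by Amitsur) to construct an invertible right multiplier $Q$ yielding the required form with $(2,2)$-entry zero.

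First, I would invoke the definition of simple range 2 on $RaR + RbR + RcR = R$ to obtain $p, q \in R$ with $R(pa+qb)R + R(pc)R = R$. By Remark~\ref{remark-1}, I may further assume $pR + qR = R$. Proposition~\ref{prop-3.1} then yields $(pa+qb)R + pcR = dR$ for some $d$ with $RdR = R$. Next, Proposition~\ref{prop-3.5} supplies an invertible matrix $P \in R^{2\times 2}$ whose first row is $(p, q)$, and hence $PA = \Bigl(\begin{smallmatrix} pa+qb & pc \\[3pt] s & t \end{smallmatrix}\Bigr)$ for some $s, t \in R$.

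Since every Bezout domain is (right) Hermite, one immediate option is to reduce the bottom row $(s, t)$ on the right: there is an invertible $Q$ with $(s, t)Q = (f, 0)$, giving $PAQ = \Bigl(\begin{smallmatrix} x & z \\[3pt] f & 0 \end{smallmatrix}\Bigr)$. This has $(2,2) = 0$ automatically, and a short computation shows $xR + zR = dR$ and $RxR + RzR = R$. However, the latter is strictly weaker than the desired $RxR = R$, so this naive reduction is not quite enough.

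The improved approach I would pursue is to choose $Q$ via Proposition~\ref{prop-3.5} with first column $(u, v)^{T}$ satisfying $(pa+qb)u + pc\,v = d$ and $Ru + Rv = R$. Both conditions are simultaneously feasible: Hermite produces an invertible $Q$ with $(pa+qb,\, pc)Q = (d, 0)$, and the first column of an invertible $2\times 2$ matrix always generates $R$ as a left ideal. With this $Q$ one has $(PAQ)_{11} = d$, so $PAQ = \Bigl(\begin{smallmatrix} d & 0 \\[3pt] \sigma & \omega \end{smallmatrix}\Bigr)$ for some $\sigma, \omega \in R$. A final pair of elementary operations $\Bigl(\begin{smallmatrix} 1 & 0 \\[3pt] r & 1 \end{smallmatrix}\Bigr)$ on the left and $\Bigl(\begin{smallmatrix} 1 & s' \\[3pt] 0 & 1 \end{smallmatrix}\Bigr)$ on the right transforms $PAQ$ into $\Bigl(\begin{smallmatrix} d & ds' \\[3pt] rd+\sigma & (rd+\sigma)s'+\omega \end{smallmatrix}\Bigr)$; choosing $r, s'$ so that $(rd+\sigma)s' = -\omega$ moves the zero to position $(2, 2)$ while preserving $(1, 1) = d$ with $RdR = R$. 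The crux of the argument is the solvability of this last equation in $R$, which I expect to handle via the Bezout property of $R$ in tandem with the two-sided ideal identity $RdR = R$.
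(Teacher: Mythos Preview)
Your core argument---invoking simple range~2 together with Remark~\ref{remark-1} to obtain $p,q$ with $pR+qR=R$, applying Proposition~\ref{prop-3.1} to get $(pa+qb)R+pcR=dR$ with $RdR=R$, and then using Proposition~\ref{prop-3.5} to build invertible $P,Q$ so that $(PAQ)_{11}=d$---is exactly the paper's approach. You in fact improve on the paper at one point: the paper tacitly assumes $Ru+Rv=R$ when it invokes Proposition~\ref{prop-3.5} for $Q$, whereas you justify this by producing $Q$ from the Hermite reduction of the row $(pa+qb,\,pc)$ and observing that the first column of any invertible $2\times2$ matrix is left-unimodular.

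Where both arguments fall short is the zero in position $(2,2)$. The paper's own proof stops at $PAQ=\Bigl(\begin{smallmatrix} x & * \\[3pt] * & * \end{smallmatrix}\Bigr)$ and never addresses this entry; for the sole application in Theorem~\ref{theor-3.7} this is harmless, since there $x$ becomes a unit and the remaining entries are cleared by elementary operations. Your final step, however, is a genuine gap: the equation $(rd+\sigma)s'=-\omega$ need not be solvable in a noncommutative Bezout domain, and neither the Bezout condition nor the identity $RdR=R$ gives any control over whether $\omega$ lies in $(rd+\sigma)R$ for some $r$. So either the $(2,2)$-zero in the lemma statement should be regarded as inessential (which is what the paper's proof and its subsequent use suggest), or an additional argument---missing from both your proposal and the paper---would be required.
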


\begin{proof}
  Since $RaR+RbR+RcR=R$ and $R$ is a ring of simple range 2, then $(pa+qb)R+pcR=R$ where $RxR=R$. We have
  $$
  \begin{pmatrix}
      p & q
    \end{pmatrix}\begin{pmatrix}
      a & c \\
      b & 0
    \end{pmatrix}\begin{pmatrix}
      u \\
      v
    \end{pmatrix}=x
  $$
  where $(pa+qb)u+pcv=x$ for some $u,v\in R$. By Remark~\ref{remark-1} and Proposition~\ref{prop-3.5} we have
  $$
  PAQ=\begin{pmatrix}
      x & * \\
      * & *
    \end{pmatrix}
  $$ for some invertible matrices
  $$
  P=\begin{pmatrix}
      p & q \\
      * & *
    \end{pmatrix}, \quad Q=\begin{pmatrix}
      u & * \\
      v & *
    \end{pmatrix}.
  $$
\end{proof}

According to Theorem~\ref{theor-3.3} D-K elementary divisor domain is a ring of simple range 2. In what follows we shall prove that for invariant Bezout domain the condition of a simple range 2 is sufficient, that is, the following theorem holds.

\begin{theorem}\label{theor-3.7}
  Let $R$ be invariant Bezout domain of simple range 2. Then $R$ is D-K elementary divisor ring.
\end{theorem}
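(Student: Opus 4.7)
The plan is to show that every matrix $A$ over $R$ admits a canonical diagonal reduction satisfying the D-K condition. Because every element of $R$ is invariant, the condition $R\varepsilon_{i+1}R\subseteq\varepsilon_iR\cap R\varepsilon_i$ collapses to the plain divisibility $\varepsilon_i\mid\varepsilon_{i+1}$, while invariance of each $\varepsilon_i$ is automatic. Since $R$ is a Bezout domain it is Hermite by Amitsur, and by the standard Kaplansky-style induction on matrix size (using the Hermite property to strip off rows and columns) it suffices to prove the $2\times 2$ case.

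So let $A\in R^{2\times 2}$. First, apply the left Hermite property to the second column of $A$ to obtain an invertible $P_0$ with $P_0A=\Bigl(\begin{smallmatrix}a & c\\[3pt] b & 0\end{smallmatrix}\Bigr)$; the degenerate case of a vanishing second column is trivial. Let $\mathfrak{d}$ generate the right ideal $aR+bR+cR$; by invariance this coincides with the left ideal $Ra+Rb+Rc$ and with the two-sided ideal $RaR+RbR+RcR$. Write $a=\mathfrak{d}a'$, $b=\mathfrak{d}b'$, $c=\mathfrak{d}c'$; cancellation in the domain yields $a'R+b'R+c'R=R$, and in particular $c'\neq 0$. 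Lemma~\ref{lemma-3.6} applied to $A'=\Bigl(\begin{smallmatrix}a' & c'\\[3pt] b' & 0\end{smallmatrix}\Bigr)$ now furnishes invertible $P_1,Q_1$ with $P_1A'Q_1=\Bigl(\begin{smallmatrix}x & z\\[3pt] y & 0\end{smallmatrix}\Bigr)$ and $RxR=R$; invariance then forces $xR=R$, so $x$ is a unit. Standard elementary operations using $x^{-1}$ clear $y$ and $z$ and normalize the $(1,1)$-entry to $1$, producing invertible $\tilde P,\tilde Q$ with $\tilde PA'\tilde Q=\mathrm{diag}(1,\delta)$ for some $\delta\in R$.

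The main obstacle is to transfer this equivalence back to $\Bigl(\begin{smallmatrix}a & c\\[3pt] b & 0\end{smallmatrix}\Bigr)=\mathfrak{d}A'$, because $\mathfrak{d}$ does not commute with the entries of $\tilde P$. Invariance rescues us: the rule $r\mathfrak{d}=\mathfrak{d}\sigma(r)$ well-defines a ring automorphism $\sigma$ of $R$ (a ring homomorphism by inspection, surjective since $R\mathfrak{d}=\mathfrak{d}R$, and injective because $\mathfrak{d}$ is a nonzerodivisor). Applied entry-wise, $\sigma$ preserves invertibility of matrices. Setting $U:=\sigma^{-1}(\tilde P)$, one has $U\mathfrak{d}=\mathfrak{d}\tilde P$ as scalar-on-matrix products, so
\[
UP_0A\tilde Q=U\mathfrak{d}A'\tilde Q=\mathfrak{d}\tilde PA'\tilde Q=\mathfrak{d}\,\mathrm{diag}(1,\delta)=\mathrm{diag}(\mathfrak{d},\mathfrak{d}\delta),
\]
which is the required D-K form since $\mathfrak{d}\mid\mathfrak{d}\delta$ and every element of $R$ is invariant. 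Induction on matrix size then completes the proof.
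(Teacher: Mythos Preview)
Your argument is correct, and its core coincides with the paper's: reduce to a $2\times 2$ matrix $\Bigl(\begin{smallmatrix}a&c\\[2pt]b&0\end{smallmatrix}\Bigr)$ with $RaR+RbR+RcR=R$ and $c\ne0$, invoke Lemma~\ref{lemma-3.6} to make the $(1,1)$-entry satisfy $RxR=R$ (hence be a unit, by invariance), and then sweep out the remaining entries. The difference lies in how that unimodular $2\times 2$ form is reached. The paper simply quotes \cite[Proposition~1]{11-gatzab}, which for invariant Bezout domains packages the whole reduction in one stroke. You instead carry it out by hand: Hermite-reduce the second column, factor the left gcd $\mathfrak d$ out of the entries, apply Lemma~\ref{lemma-3.6} to the primitive matrix $A'$, and then transport the resulting equivalence back across the scalar $\mathfrak d$ via the ring automorphism $\sigma$ determined by $r\mathfrak d=\mathfrak d\,\sigma(r)$. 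That $\sigma$-trick is exactly the device needed to handle the noncommutativity and is presumably what is buried inside the cited proposition; making it explicit is a genuine gain in transparency. The only place your write-up is looser than the paper's is the appeal to a ``standard Kaplansky-style induction'' for the passage from arbitrary size to $2\times 2$; this step is valid for two-sided Hermite rings but deserves a precise reference in the noncommutative setting, just as the paper defers to \cite{11-gatzab}.
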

\begin{proof}
 Invariant Bezout domain is obvious example of a domain with D-K conditions. According to \cite[Proposition 1]{11-gatzab}  in order to prove Theorem~\ref{theor-3.7} we  need to prove that matrix
 $A=\Bigl(\begin{smallmatrix}
      a & c \\[3pt]
      b & 0
    \end{smallmatrix}\Bigr)$
    where $c\ne0$ and $RaR+RbR+RcR=R$ hold canonical diagonal reduction. By Lemma~\ref{lemma-3.6}, matrix $A$ is equivalent to matrix
$X=\Bigl(\begin{smallmatrix}
      x & y \\[3pt]
      0 & z
    \end{smallmatrix}\Bigr)$   where $RxR=R$. Since $R$ is an invariant domain we have that $x$ is an invertible element. Elementary transformations of row and columns matrix $X$ will be reduced to appearance
$D=\Bigl(\begin{smallmatrix}
      1 & 0 \\[3pt] 
      0 & \Delta
    \end{smallmatrix}\Bigr)$    for some $d\in R$, i.e. we proved that a matrix $A$ hold a canonical diagonal reduction.
 \end{proof}

\end{document}